\definecolor{webgreen}{rgb}{0,.5,0}
\definecolor{webbrown}{rgb}{.6,0,0}
\def\1{{\bf 1}}
\def\N{{\Bbb N}}
\def\Z{{\Bbb Z}}
\def\C{{\Bbb C}}
\def\div{\,|\,}
\def\ddiv{\,||\,}
\def\Reg{\operatorname{Reg}}
\def\id{\operatorname{id}}
\def\Sreg{\operatorname{S[reg]}}
\def\Srelpr{\operatorname{S[relpr]}}
\def\Treg{\operatorname{T[reg]}}
\def\Trelpr{\operatorname{T[relpr]}}
\def\Preg{\operatorname{P[reg]}}
\def\Qreg{\operatorname{Q[reg]}}
\def\Qrelpr{\operatorname{Q[relpr]}}
\def\Rreg{\operatorname{R[reg]}}
\def\Rrelpr{\operatorname{R[relpr]}}
\def\Phireg{\operatorname{\Phi[reg]}}
\def\Phiregmod{\operatorname{\Phi[regmod]}}
\def\Uregmod{\operatorname{U[regmod]}}
\def\Vregmod{\operatorname{V[regmod]}}
\newtheorem{theorem}{Theorem}[section]
\newtheorem{proposition}[theorem]{Proposition}
\newtheorem{remark}[theorem]{Remark}
\begin{document}

\title{\bf Some remarks on regular integers modulo $n$}
\author{Br\u{a}du\c{t} Apostol and L\'aszl\'o T\'oth}
\date{}
\maketitle

\centerline{Filomat {\bf 29} (2015), no 4, 687--701}

\centerline{Available at \url{http://www.pmf.ni.ac.rs/filomat}}

\begin{abstract} An integer $k$ is called regular (mod $n$) if
there exists an integer $x$ such that $k^2x\equiv k$ (mod $n$).
This holds true if and only if $k$ possesses a weak order (mod $n$),
i.e., there is an integer $m\ge 1$ such that $k^{m+1} \equiv k$ (mod $n$).
Let $\varrho(n)$ denote the number of regular integers (mod $n$) in the set $\{1,2,\ldots,n\}$.
This is an analogue of Euler's $\phi$ function. We introduce the multidimensional
generalization of $\varrho$, which is the analogue of Jordan's function. We establish identities
for the power sums of regular integers (mod $n$) and for some other finite sums and products over regular integers (mod $n$),
involving the Bernoulli polynomials, the Gamma function and the cyclotomic polynomials, among others.
We also deduce an analogue of Menon's identity and investigate the maximal orders of certain related functions.
\end{abstract}

{\sl 2010 Mathematics Subject Classification}: 11A25, 11B68, 11N37,
33B10, 33B15

{\sl Key Words and Phrases}: regular integer (mod $n$), Euler's
totient function, Jordan's function, Ramanujan's sum, unitary
divisor, Bernoulli numbers and polynomials, Gamma function, finite trigonometric
sums and products, cyclotomic polynomial

\section{Introduction}

Throughout the paper we use the notations: $\N:=\{1,2,\ldots\}$,
$\N_0 :=\{0,1,2,\ldots\}$, $\Z$ is the set of integers, ${\lfloor x
\rfloor}$ is the integer part of $x$, $\1$ is the function given by
$\1(n)=1$ ($n\in \N$), $\id$ is the function given by $\id(n)=n$
($n\in \N$), $\phi$ is Euler's totient function, $\tau(n)$ is the
number of divisors of $n$, $\mu$ is the M\"obius function,
$\omega(n)$ stands for the number of prime factors of $n$, $\Lambda$
is the von Mangoldt function, $\kappa(n):=\prod_{p\div n} p$ is the
largest squarefree divisor of $n$, $c_n(t)$ are the Ramanujan sums
defined by $c_n(t) :=\sum_{1\le k \le n, \gcd(k,n)=1} \exp(2\pi
ikt/n)$ ($n\in \N, t\in \Z$), $\zeta$ is the Riemann zeta function.
Other notations will be fixed inside the paper.

Let $n\in \N$ and $k\in \Z$. Then $k$ is called regular (mod $n$) if
there exists $x\in \Z$ such that $k^2x\equiv k$ (mod $n$). This
holds true if and only if $k$ possesses a weak order (mod $n$),
i.e., there is $m\in \N$ such that $k^{m+1} \equiv k$ (mod $n$).
Every $k\in \Z$ is regular (mod $1$). If $n>1$ and its prime power
factorization is $n=p_1^{\nu_1}\cdots p_r^{\nu_r}$, then $k$ is
regular (mod $n$) if and only if for every $i\in \{1,\ldots,r\}$
either $p_i\nmid k$ or $p_i^{\nu_i}\div k$. Also, $k$ is regular
(mod $n$) if and only if $\gcd (k,n)$ is a unitary divisor of $n$.
We recall that $d$ is said to be a unitary divisor of $n$ if $d\div
n$ and $\gcd (d,n/d)=1$, notation $d \ddiv n$. Note that if $n$ is
squarefree, then every $k\in \Z$ is regular (mod $n$). See the
papers \cite{AlkOsb2008, Mor1972,Mor1974,Tot2008} for further
discussion and properties of regular integers (mod $n$), and their
connection with the notion of regular elements of a ring in the
sense of J. von Neumann.

An integer $k$ is regular (mod $n$) if and only if $k+n$ is regular
(mod $n$). Therefore, it is justified to consider the set
\begin{equation*}
\Reg_n:=\{k\in \N: 1\le k\le n, k \text{ is regular (mod $n$)}\}
\end{equation*}
and the quantity $\varrho(n):=\# \Reg_n$. For example,
$\Reg_{12}=\{1,3,4,5,7,8,9,11,12\}$ and $\varrho(12)=9$. If $n$ is
squarefree, then $\Reg_n=\{1,2,\ldots,n\}$ and $\varrho(n)=n$. Note
that $1,n\in \Reg_n$ for every $n\in \N$. The arithmetic function
$\varrho$ is an analogue of Euler's $\phi$ function, it is
multiplicative and $\varrho(p^{\nu})=\phi(p^{\nu})+1=
p^{\nu}-p^{\nu-1}+1$ for every prime power $p^{\nu}$ ($\nu \in \N$).
Consequently,
\begin{equation} \label{varrho}
\varrho(n)= \sum_{d \ddiv n} \phi(d) \quad (n\in \N).
\end{equation}

See, e.g., \cite{McC1986} for general properties of unitary
divisors, in particular the unitary convolution of the arithmetic
functions $f$ and $g$ defined by $(f \times g)(n)=\sum_{d\ddiv n}
f(d) g(n/d)$. Here $f\times g$ preserves the multiplicativity
of the functions $f$ and $g$.  We refer to
\cite{Tot2008} for asymptotic properties of the function $\varrho$.

The function
\begin{equation*}
\overline{c}_n(t) :=\sum_{k\in \Reg_n} \exp(2\pi ikt/n) \quad (n\in
\N, t\in \Z),
\end{equation*}
representing an analogue of the Ramanujan sum $c_n(t)$ was
investigated in the paper \cite{HauTot2012}. We have
\begin{equation*}
\overline{c}_n(t)= \sum_{d \ddiv n} c_d(t) \quad (n\in \N, t\in \Z).
\end{equation*}

It turns out that for every fixed $t$ the function $n\mapsto
\overline{c}_n(t)$ is multiplicative, $\overline{c}_n(0)=\varrho(n)$
and $\overline{c}_n(1)=\overline{\mu}(n)$ is the characteristic
function of the squarefull integers $n$.

The gcd-sum function is defined by $P(n) :=\sum_{k=1}^n \gcd(k,n)
=\sum_{d\div n} d \, \phi(n/d)$, see \cite{Tot2010}. The following
analogue of the gcd-sum function was introduced in the paper
\cite{Tot2009}:
\begin{equation*}
\widetilde{P}(n):= \sum_{k\in \Reg_n} \gcd(k,n).
\end{equation*}

One has
\begin{equation*}
\widetilde{P}(n)=\sum_{d\ddiv n} d\, \phi(n/d)= n \prod_{p\div n}
\left(2-\frac1{p} \right) \quad (n\in \N),
\end{equation*}
the asymptotic properties of $\widetilde{P}(n)$ being investigated
in \cite{KonKat2010,Tot2010, ZhaZha2010,ZhaZha2011}.

In the present paper we discuss some further properties of the
regular integers (mod $n$). We first introduce the multidimensional
generalization $\varrho_r$ ($r\in \N$) of the function $\varrho$,
which is the analogue of the Jordan function $\phi_r$, where
$\phi_r(n)$ is defined as the number of ordered $r$-tuples
$(k_1,\ldots,k_r)\in \{1,\ldots,n\}^r$ such that
$\gcd(k_1,\ldots,k_r)$ is prime to $n$ (see, e.g., \cite{McC1986,
Sch1999}). Then we consider the sum $\Sreg_r(n)$ of $r$-th powers of
the regular integers (mod $n$) belonging to $\Reg_n$. In the case
$r\in \N$ we deduce an exact formula for $\Sreg_r(n)$ involving the
Bernoulli numbers $B_m$. For a positive real number $r$ we derive an
asymptotic formula for $\Sreg_r(n)$. We combine the functions
$\overline{c}_n(t)$ and $\widetilde{P}(n)$ defined above and
establish identities for sums, respectively products over the
integers in $\Reg_n$ concerning the Bernoulli polynomials $B_m(x)$,
the Gamma function $\Gamma$, the cyclotomic polynomials $\Phi_m(x)$
and certain trigonometric functions. We point out that for $n$
squarefree these identities reduce to the corresponding ones over
$\{1,2,\ldots,n\}$. We also deduce an analogue of Menon's identity
and investigate the maximal orders of some related functions.

\newpage
%%%%%%%%%%%%%%%%%%%%%%%%%%%%%%%%%%%%%%%%%%%%%%%%%%%%%%%%%%%%%%%%%%%%%%%%%
%%%%%%%%%%%%%%%%%%%%%%%%%%%%%%%%%%%%%%%%%%%%%%%%%%%%%%%%%%%%%%%%%%%%%%%%%

\section{A generalization of the function $\varrho$}

For $r\in \N$ let $\varrho_r(n)$ be the number of ordered $r$-tuples
$(k_1,\ldots,k_r)\in \{1,\ldots,n\}^r$ such that
$\gcd(k_1,\ldots,k_r)$ is regular (mod $n$). If $r=1$, then
$\varrho_1=\varrho$. The arithmetic function $\varrho_r$ is the
analogue of the Jordan function $\phi_r$, defined in the
Introduction and verifying $\phi_r(n)=n^r\prod_{p\div n} (1-1/p^r)$
($n\in \N$).

\begin{proposition} i) For every $r, n\in \N$,
\begin{equation*}
\varrho_r(n)= \sum_{d \ddiv n} \phi_r(d).
\end{equation*}

ii) The function $\varrho_r$ is multiplicative and for every prime power $p^{\nu}$ ($\nu \in \N$),
\begin{equation*}
\varrho_r(p^{\nu}) = p^{r\nu} -p^{r(\nu-1)}+1.
\end{equation*}
\end{proposition}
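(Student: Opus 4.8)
The plan is to prove both parts together, using the structural characterization of regular integers already recorded in the Introduction. The key fact I would lean on is that $k$ is regular (mod $n$) if and only if $\gcd(k,n)$ is a unitary divisor of $n$. This lets me partition the $r$-tuples counted by $\varrho_r(n)$ according to the value $d=\gcd(k_1,\ldots,k_r)$.

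First I would write $\varrho_r(n)$ as a sum over admissible gcd values. By the characterization, an $r$-tuple $(k_1,\ldots,k_r)$ contributes to $\varrho_r(n)$ exactly when $\gcd(k_1,\ldots,k_r)$ is regular (mod $n$), i.e. when $\gcd(\gcd(k_1,\ldots,k_r),n)$ is a unitary divisor of $n$. The clean way to carry out the count is to group the tuples by $e=\gcd(k_1,\ldots,k_r,n)$, the gcd of all the $k_i$ together with $n$. For each unitary divisor $d \ddiv n$, the number of $r$-tuples in $\{1,\ldots,n\}^r$ whose gcd with $n$ equals $d$ is precisely $\phi_r(n/d)$ after the substitution $k_i = d\,k_i'$, since this forces $\gcd(k_1',\ldots,k_r',n/d)=1$ and there are $\phi_r(n/d)$ such reduced tuples in $\{1,\ldots,n/d\}^r$; one must check that regularity of $\gcd(k_1,\ldots,k_r)$ mod $n$ depends only on $e=\gcd(\gcd(k_1,\ldots,k_r),n)$, which follows because $k$ is regular (mod $n$) iff $\gcd(k,n) \ddiv n$. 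Summing over the unitary divisors $d \ddiv n$ and reindexing via $d \mapsto n/d$ (which permutes the unitary divisors) yields $\varrho_r(n)=\sum_{d\ddiv n} \phi_r(d)$, giving part i).

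For part ii), the multiplicativity of $\varrho_r$ is immediate from the identity in part i): $\varrho_r$ is the unitary convolution $\phi_r \times \1$, and the Introduction records that the unitary convolution preserves multiplicativity, while $\phi_r$ is the standard multiplicative Jordan function. For the value at a prime power, I would evaluate the unitary divisor sum directly. Since the only unitary divisors of $p^\nu$ are $1$ and $p^\nu$ (because a unitary divisor $d \ddiv p^\nu$ needs $\gcd(d,p^\nu/d)=1$, forcing $d\in\{1,p^\nu\}$), part i) gives
\begin{equation*}
\varrho_r(p^\nu)=\phi_r(1)+\phi_r(p^\nu)=1+\bigl(p^{r\nu}-p^{r(\nu-1)}\bigr)=p^{r\nu}-p^{r(\nu-1)}+1,
\end{equation*}
using $\phi_r(p^\nu)=p^{r\nu}(1-p^{-r})=p^{r\nu}-p^{r(\nu-1)}$.

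The main obstacle is the counting step in part i): one must carefully verify that the regularity of the gcd mod $n$ is governed entirely by $e=\gcd(k_1,\ldots,k_r,n)$ rather than by the full gcd $\gcd(k_1,\ldots,k_r)$, and that the correspondence between tuples with a prescribed $e$ and reduced tuples counted by $\phi_r(n/e)$ is a genuine bijection. Once this bookkeeping is pinned down, everything else is a direct appeal to the stated properties of the Jordan function and the unitary convolution.
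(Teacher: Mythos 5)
Your proof is correct and takes essentially the same route as the paper's: you group the $r$-tuples according to $d=\gcd(k_1,\ldots,k_r,n)$, use that regularity of the gcd (mod $n$) is equivalent to $\gcd(k_1,\ldots,k_r,n)\ddiv n$, identify the inner count as $\phi_r(n/d)$ via the substitution $k_i=d\ell_i$, and reindex over the unitary divisors. The only difference is cosmetic: the paper dispatches part ii) with ``follows at once by i),'' whereas you spell out the unitary-convolution multiplicativity and the evaluation $\varrho_r(p^{\nu})=\phi_r(1)+\phi_r(p^{\nu})$, which is exactly the intended argument.
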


\begin{proof} i) The integer $\gcd(k_1,\ldots,k_r)$ is regular (mod $n$)
if and only if $\gcd(\gcd(k_1,\ldots,k_r),n)\ddiv n$, that is
$\gcd(k_1,\ldots,k_r,n)\ddiv n$ and grouping the $r$-tuples
$(k_1,\ldots,k_r)$ according to the values
$\gcd(k_1,\ldots,k_r,n)=d$ we deduce that
\begin{equation*}
\varrho_r(n)= \sum_{\substack{(k_1,\ldots,k_r)\in \{1,\ldots,n\}^r\\
\gcd(k_1,\ldots,k_r) \text{ regular (mod $n$)}}} 1
= \sum_{d \ddiv n} \sum_{\substack{(k_1,\ldots,k_r)\in \{1,\ldots,n\}^r\\
\gcd(k_1,\ldots,k_r,n)=d}} 1
\end{equation*}
\begin{equation*}
= \sum_{d \ddiv n} \sum_{\substack{(\ell_1,\ldots,\ell_r)\in \{1,\ldots,n/d\}^r\\
\gcd(\ell_1,\ldots,\ell_r,n/d)=1}} 1,
\end{equation*}
where the inner sum is $\phi_r(n/d)$, according to its definition.

ii) Follows at once by i).
\end{proof}

More generally, for a fixed real number $s$ let
$\phi_s(n)=\sum_{d\div n} d^s \mu(n/d)$ be the generalized Jordan
function and define $\varrho_s$ by
\begin{equation} \label{varrho_s}
\varrho_s(n)=\sum_{d\ddiv n} \phi_s(d) \quad (n\in \N).
\end{equation}

The functions $\phi_s$ and $\varrho_s$ (which will be used in the
next results of the paper) are multiplicative and for every prime
power $p^{\nu}$ ($\nu \in \N$) one has $\phi_s(p^{\nu})= p^{s\nu}
-p^{s(\nu-1)}$ and $\varrho_s(p^{\nu})= p^{s\nu} -p^{s(\nu-1)}+1$.
Note that $\phi_{-s}(n) = n^{-s} \prod_{p^{\nu}\ddiv n} \left(1-p^s
\right)$ and $\varrho_{-s}(n) = n^{-s} \prod_{p^{\nu}\ddiv n}
\left(p^{s\nu} -p^s+1\right)$.

\begin{proposition} If $s>1$ is a real number, then
\begin{equation} \label{asympt_varrho_r}
\sum_{n\le x} \varrho_s(n)= \frac{x^{s+1}}{s+1} \prod_p
\left(1-\frac1{p^{s+1}}+ \frac{p-1}{p(p^{s+1}-1)} \right) + O(x^s).
\end{equation}
\end{proposition}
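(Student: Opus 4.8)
The plan is to reduce the summatory function to a Dirichlet convolution against the power function $n\mapsto n^s$ and then apply the elementary estimate $\sum_{e\le y} e^s = \frac{y^{s+1}}{s+1}+O(y^s)$. First I would write $\varrho_s = f * g$ as an ordinary Dirichlet convolution, where $g(n)=n^s$ and $f$ is the multiplicative function forced by this factorization, namely $f = \varrho_s * (\mu g)$ with $\mu g$ the Dirichlet inverse $n\mapsto \mu(n)n^s$ of the completely multiplicative $g$. A direct computation at prime powers, using $\varrho_s(p^{\nu})=p^{s\nu}-p^{s(\nu-1)}+1$, gives $f(1)=1$, $f(p)=0$, and $f(p^{\nu})=1-p^s$ for all $\nu\ge 2$. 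The key structural point is that $f(p)=0$, so $f$ is supported on the squarefull integers and satisfies $|f(d)|\le d^{s/2}$ (since $|f(d)|=\prod_{p^{\nu}\div d}(p^s-1)\le \kappa(d)^s\le d^{s/2}$ for squarefull $d$); together with $s>1$ this makes both $\sum_d f(d)/d^{s+1}$ and $\sum_d |f(d)|/d^s$ absolutely convergent.

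With this factorization I would interchange the order of summation to get
\[
\sum_{n\le x}\varrho_s(n) = \sum_{d\le x} f(d) \sum_{e\le x/d} e^s,
\]
and substitute $\sum_{e\le y} e^s = \frac{y^{s+1}}{s+1}+O(y^s)$ with $y=x/d$. This produces the putative main term $\frac{x^{s+1}}{s+1}\sum_{d\le x} f(d)/d^{s+1}$ together with an error $O\!\left(x^s\sum_{d\le x}|f(d)|/d^s\right)$. Completing the inner series to infinity introduces a tail $\frac{x^{s+1}}{s+1}\sum_{d>x} f(d)/d^{s+1}$, which I would bound using the squarefull support and the counting estimate that the squarefull $d\le t$ number $O(t^{1/2})$; partial summation then gives $\sum_{d>x}|f(d)|/d^{s+1}\ll x^{-(s+1)/2}$, so the tail contributes $O(x^{(s+1)/2})$. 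For $s>1$ one has $(s+1)/2<s$, so this is absorbed into $O(x^s)$, and the convergence of $\sum_d|f(d)|/d^s$ (valid precisely because $s>1$) shows the convolution error is $O(x^s)$ as well. Hence $\sum_{n\le x}\varrho_s(n) = \frac{x^{s+1}}{s+1}\,C + O(x^s)$ with $C=\sum_{d=1}^\infty f(d)/d^{s+1} = \prod_p \bigl(1+(1-p^s)\sum_{\nu\ge 2}p^{-\nu(s+1)}\bigr)$.

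The final step is to evaluate and simplify this Euler product. Summing the geometric series $\sum_{\nu\ge 2}p^{-\nu(s+1)} = \frac{1}{p^{s+1}(p^{s+1}-1)}$ yields the local factor $1+\frac{1-p^s}{p^{s+1}(p^{s+1}-1)}$, and a routine partial-fraction rearrangement of the fractional part, namely $\frac{1-p^s}{p^{s+1}(p^{s+1}-1)} = -\frac{1}{p^{s+1}}+\frac{p-1}{p(p^{s+1}-1)}$, rewrites this as $1-\frac{1}{p^{s+1}}+\frac{p-1}{p(p^{s+1}-1)}$, which is exactly the factor appearing in \eqref{asympt_varrho_r}.

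I expect the main obstacle to be the bookkeeping of the two error contributions: one must verify that both the tail of the completed series and the convolution error term are genuinely $O(x^s)$, and it is here that the hypothesis $s>1$ is essential, guaranteeing both $\sum_d|f(d)|/d^s<\infty$ and $x^{(s+1)/2}=O(x^s)$. Everything else is the standard multiplicative computation of $f$ and the elementary power-sum estimate. Alternatively, the same conclusion follows by writing $\sum_n\varrho_s(n)n^{-z}=\zeta(z-s)\sum_d f(d)d^{-z}$, locating the simple pole at $z=s+1$ with residue $C$, and invoking a Tauberian argument; the elementary route above avoids analytic continuation and delivers the error term $O(x^s)$ directly.
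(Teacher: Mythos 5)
Your proof is correct, and it takes a genuinely different route from the paper's. The paper keeps the unitary structure $\varrho_s(n)=\sum_{d\,||\,n}\phi_s(d)$ explicit: it first establishes the uniform estimate \eqref{Jordan_t} for $\sum_{n\le x,\,\gcd(n,t)=1}\phi_s(n)$ (expanding $\phi_s=\mu*\id^s$ and invoking \eqref{asympt_relprime}, with error $O(x^s2^{\omega(t)})$), then writes $\sum_{n\le x}\varrho_s(n)=\sum_{d\le x}\phi_s(x/d,d)$ and sums over $d$, landing on the constant $\frac{1}{\zeta(s+1)}\sum_{d\ge 1}\frac{\phi(d)}{d\,\phi_{s+1}(d)}$, whose Euler factors one checks agree with the stated product. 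You instead collapse everything into a single ordinary Dirichlet convolution $\varrho_s=f*\id^s$ with $f=\varrho_s*(\mu\,\id^s)$; your prime-power computation ($f(p)=0$, $f(p^{\nu})=1-p^s$ for $\nu\ge 2$) is right, and the resulting squarefull support with $|f(d)|\le \kappa(d)^s\le d^{s/2}$ is the structural point that replaces the paper's coprimality bookkeeping. Note that your appeal to the $O(t^{1/2})$ count of squarefull integers is not a luxury: for $1<s\le 2$ the trivial bound $\sum_{d}d^{-s/2}$ over all integers diverges and the tail estimate would not close, so you correctly identified where the hypothesis $s>1$ enters (convergence of $\sum_{d\ \mathrm{squarefull}}d^{-s/2}$ and $x^{(s+1)/2}=O(x^s)$). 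Your partial-fraction identity $\frac{1-p^s}{p^{s+1}(p^{s+1}-1)}=-\frac{1}{p^{s+1}}+\frac{p-1}{p(p^{s+1}-1)}$ checks out, so your constant matches \eqref{asympt_varrho_r} exactly. As for what each approach buys: the paper's two-step argument yields the uniform estimate \eqref{Jordan_t} for the Jordan function in coprime classes as a byproduct of independent interest, mirroring the unitary-convolution viewpoint used throughout the paper, whereas your single-convolution argument is shorter, produces the Euler product in closed form immediately, and makes the error analysis and the role of $s>1$ more transparent; both deliver the same $O(x^s)$ error term.
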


\begin{proof}
We need the following asymptotics. Let $s>0$ be fixed real number.
Then uniformly for real $x>1$ and $t\in \N$,
\begin{equation} \label{Jordan_t}
\phi_s(x,t):= \sum_{\substack{n\le x\\ \gcd(n,t)=1}} \phi_s(n)=
\frac{x^{s+1}}{(s+1)\zeta(s+1)}\cdot
\frac{t^s\phi(t)}{\phi_{s+1}(t)} +O(x^s 2^{\omega(t)}).
\end{equation}

To show \eqref{Jordan_t} use the known estimate, valid for every fixed $s>0$ and $t\in \N$,
\begin{equation} \label{asympt_relprime}
\sum_{\substack{n\le x \\ \gcd(n,t)=1}} n^s = \frac{x^{s+1}}{s+1}\cdot
\frac{\phi(t)}{t}+ O\left(x^s 2^{\omega(t)} \right).
\end{equation}

We obtain
\begin{equation*}
\phi_s(x,t)= \sum_{\substack{de=n\le x \\ \gcd(n,t)=1}} \mu(d)e^s =
\sum_{\substack{d\le x \\ \gcd(d,t)=1}} \mu(d) \sum_{\substack{e\le x/d
\\ \gcd(e,t)=1}}e^s
\end{equation*}
\begin{equation*}
=\sum_{\substack{d\le x\\ \gcd(d,t)=1}} \mu(d) \left(
\frac{(x/d)^{s+1}}{s+1} \cdot \frac{\phi(t)}{t} + O \left(
(x/d)^s 2^{\omega(t)}\right)\right)
\end{equation*}
\begin{equation*}
=\frac{x^{s+1}}{s+1}\cdot \frac{\phi(t)}{t}\sum_{\substack{d=1\\
\gcd(d,t)=1}}^{\infty} \frac{\mu(d)}{d^{s+1}} + O \left( x^{s+1}
\sum_{d>x} \frac1{d^{s+1}}\right) + O\left(x^s 2^{\omega(t)}\right),
\end{equation*}
giving \eqref{Jordan_t}. Now  from \eqref{varrho_s} and
\eqref{Jordan_t},
\begin{equation*}
\sum_{n\le x} \varrho_s(n) = \sum_{\substack{de= n\le x\\
\gcd(d,e)=1}} \phi_s(e) = \sum_{d\le x} \sum_{\substack{e\le x/d\\
\gcd(e,d)=1}} \phi_s(e) = \sum_{d\le x} \phi_s(x/d,d)
\end{equation*}
\begin{equation*}
=\frac{x^{s+1}}{(s+1)\zeta(s+1)} \sum_{d=1} ^{\infty}
\frac{\phi(d)}{d\phi_{s+1}(d)} + O \left( x^{s+1} \sum_{d>x}
\frac{\phi(d)}{d\phi_{s+1}(d)}\right) + O\left(x^s \sum_{d\le x}
\frac{2^{\omega(d)}}{d^s} \right),
\end{equation*}
and for $s>1$ this leads to \eqref{asympt_varrho_r}.
\end{proof}

Compare \eqref{asympt_varrho_r} to the corresponding formula for the
Jordan function $\phi_s$, i.e., to \eqref{Jordan_t} with $t=1$.

\begin{remark} {\rm For the function $\varrho$ one has
\begin{equation*}
\sum_{n\le x} \varrho(n) = \frac1{2} \prod_p
\left(1-\frac1{p^2(p+1)} \right)x^2 +R(x),
\end{equation*}
where $R(x)=O(x\log^3 x)$ can be obtained by the elementary
arguments given above. This can be improved into $R(x)=O(x\log x)$
using analytic methods. See \cite{Tot2008} for references.}
\end{remark}

%%%%%%%%%%%%%%%%%%%%%%%%%%%%%%%%%%%%%%%%%%%%%%%%%%%%%%%%%%%%%%%%%%%%%%%%%
%%%%%%%%%%%%%%%%%%%%%%%%%%%%%%%%%%%%%%%%%%%%%%%%%%%%%%%%%%%%%%%%%%%%%%%%%

\section{A general scheme}

In order to give exact formulas for certain sums and products over
the regular integers (mod $n$) we first present a simple result for
a general sum over $\Reg_n$, involving a weight function $w$ and an
arithmetic function $f$. It would be possible to consider a more
general sum, namely over the ordered $r$-tuples $(k_1,\ldots,k_r)\in
\{1,\ldots,n\}^r$ such that $\gcd(k_1,\ldots,k_r)$ is regular (mod
$n$), but we confine ourselves to the following result. See
\cite{Tot2011w} for another similar scheme concerning weighted
gcd-sum functions.

\begin{proposition} \label{prop_general} i) Let $w:\N^2 \to \C$ and $f:\N \to \C$ be
arbitrary functions and consider the sum
\begin{equation*}
R_{w,f}(n):= \sum_{k\in \Reg_n} w(k,n) f(\gcd(k,n)).
\end{equation*}

Then
\begin{equation} \label{R_repr}
R_{w,f}(n)= \sum_{d\ddiv n} f(d) \sum_{\substack{j=1\\
\gcd(j,n/d)=1}}^{n/d} w(dj,n) \quad (n\in \N).
\end{equation}

ii) Assume that there is a function $g:(0,1] \to \C$ such that
$w(k,n)=g(k/n)$ ($1\le k\le n$) and let
\begin{equation*}
\overline{G}(n)=\sum_{\substack{k=1\\ \gcd(k,n)=1}}^n g(k/n) \quad
(n\in \N).
\end{equation*}

Then
\begin{equation} \label{R_repr_g}
R_{w,f}(n)=\sum_{d\ddiv n} f(d) \overline{G}(n/d) \quad (n\in \N).
\end{equation}
\end{proposition}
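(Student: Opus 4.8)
The plan is to establish both parts by partitioning the summation range according to the value of $\gcd(k,n)$, using the characterization recalled in the Introduction that $k$ is regular (mod $n$) exactly when $\gcd(k,n)$ is a unitary divisor of $n$.

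For part i), I would group the terms $k\in \Reg_n$ in the definition of $R_{w,f}(n)$ according to $d=\gcd(k,n)$. As $k$ ranges over $\Reg_n$, the value $d$ ranges over the unitary divisors $d\ddiv n$, and conversely every $k$ with $\gcd(k,n)=d\ddiv n$ is regular, so the blocks indexed by $d\ddiv n$ partition $\Reg_n$. Within the block for a fixed $d$, the condition $\gcd(k,n)=d$ together with $1\le k\le n$ is equivalent to writing $k=dj$ with $1\le j\le n/d$ and $\gcd(j,n/d)=1$. On this block the factor $f(\gcd(k,n))=f(d)$ is constant and factors out, while $w(k,n)=w(dj,n)$; collecting the contributions yields \eqref{R_repr}.

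For part ii), I would substitute the hypothesis $w(k,n)=g(k/n)$ into \eqref{R_repr}. For each $d\ddiv n$ the inner sum becomes $\sum_{j=1,\,\gcd(j,n/d)=1}^{n/d} g(dj/n)$; since $dj/n=j/(n/d)$, this is precisely $\overline{G}(n/d)$ by the definition of $\overline{G}$. Substituting gives \eqref{R_repr_g}.

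There is essentially no hard step: the whole argument is the standard gcd-grouping, and the only point requiring attention is that the divisor $d$ ranges over \emph{unitary} divisors and not over all divisors of $n$ — this is exactly what the regularity of $k$ supplies, via the equivalence between $k$ being regular (mod $n$) and $\gcd(k,n)\ddiv n$. The reparametrization $k=dj$ and the rescaling $g(dj/n)=g\bigl(j/(n/d)\bigr)$ are routine.
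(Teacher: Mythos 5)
Your proposal is correct and follows essentially the same argument as the paper: grouping the elements of $\Reg_n$ by $d=\gcd(k,n)\ddiv n$, reparametrizing $k=dj$ with $\gcd(j,n/d)=1$ for part i), and the rescaling $g(dj/n)=g\bigl(j/(n/d)\bigr)$ for part ii). No differences worth noting.
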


\begin{proof} i) Using that $k$ is regular (mod $n$) if and only if
$\gcd(k,n)\ddiv n$ and grouping the terms according to the values of
$\gcd(k,n)=d$ and denoting $k=dj$ we have at once
\begin{equation*}
R_{w,f}(n)= \sum_{d\ddiv n} f(d) \sum_{\substack{k=1\\
\gcd(k,n)=d}}^n w(k,n) = \sum_{d\ddiv n} f(d) \sum_{\substack{j=1\\
\gcd(j,n/d)=1}}^{n/d} w(dj,n).
\end{equation*}

ii) Now
\begin{equation*}
\sum_{j=1}^{n/d} w(dj,n)= \sum_{j=1}^{n/d} g(j/(n/d))=
\overline{G}(n/d).
\end{equation*}
\end{proof}

\begin{remark} \label{Remark_relprime} {\rm For the function $g$ given above let
\begin{equation*}
G(n):=\sum_{k=1}^n g(k/n).
\end{equation*}

Then we have
\begin{equation} \label{connect_G}
\overline{G}(n)= \sum_{d\div n} \mu(d) G(n/d) \quad (n\in \N).
\end{equation}

Indeed, as it is well known, $\overline{G}(n)=\sum_{k=1}^n g(k/n)
\sum_{d\mid \gcd(k,n)} \mu(d)$, giving \eqref{connect_G}. }
\end{remark}

%%%%%%%%%%%%%%%%%%%%%%%%%%%%%%%%%%%%%%%%%%%%%%%%%%%%%%%%%%%%%%%%%%%%%%%%%
%%%%%%%%%%%%%%%%%%%%%%%%%%%%%%%%%%%%%%%%%%%%%%%%%%%%%%%%%%%%%%%%%%%%%%%%%

\section{Power sums of regular integers (mod $n$)}
\label{Sect_Power_sums}

In this section we investigate the sum of $r$-th powers ($r\in \N$)
of the regular integers (mod $n$). Let $B_m$ ($m\in \N_0$) be the
Bernoulli numbers defined by the exponential generating function
\begin{equation*}
\frac{t}{e^t-1}=\sum_{m=0}^{\infty} B_m \frac{t^m}{m!}.
\end{equation*}

Here $B_0=1$, $B_1=-1/2$, $B_2=1/6$, $B_4=-1/30$, $B_m=0$ for every
$m\ge 3$, $m$ odd and one has the recurrence relation
\begin{equation} \label{Bernoulli_rec}
B_m = \sum_{j=0}^m \binom{m}{j} B_j \quad (m\ge 2).
\end{equation}

It is well known that for every $n,r\in \N$,
\begin{equation*}
S_r(n):= \sum_{k=1}^n k^r = \frac1 {r+1} \sum_{m=0}^r (-1)^m
\binom{r+1}{m} B_m n^{r+1-m}
\end{equation*}
\begin{equation} \label{sum_r}
=\frac{n^r}{2} + \frac1 {r+1} \sum_{m=0}^{\lfloor r/2 \rfloor}
\binom{r+1}{2m} B_{2m} n^{r+1-2m}.
\end{equation}

From here one obtains, using the same device as that given in Remark
\ref{Remark_relprime} that for every $n,r\in \N$ with $n\ge 2$,
\begin{equation} \label{S_relprime}
\Srelpr_r(n):= \sum_{\substack{k=1\\ \gcd(k,n)=1}}^n k^r =
\frac{n^r}{r+1} \sum_{m=0}^{\lfloor r/2 \rfloor} \binom{r+1}{2m}
B_{2m}\phi_{1-2m}(n),
\end{equation}
where $\phi_{1-2m}(n)= n^{1-2m} \prod_{p \div n} \left(1-p^{2m-1}
\right)$. Formula \eqref{S_relprime} was given in \cite{Sin2009}.
Here we prove the following result.

\begin{proposition} \label{prop_powersum}
For every $n,r\in \N$,
\begin{equation} \label{sum_reg_r}
\Sreg_r(n) := \sum_{k\in \Reg_n} k^r =\frac{n^r}{2}+ \frac{n^r}{r+1}
\sum_{m=0}^{\lfloor r/2 \rfloor} \binom{r+1}{2m} B_{2m}\,
\varrho_{1-2m}(n),
\end{equation}
where
\begin{equation*}
\varrho_{1-2m}(n) = n^{1-2m} \prod_{p^\nu \ddiv n} \left(
p^{(2m-1)\nu}- p^{2m-1} +1 \right)
\end{equation*}
is the generalized $\varrho$ function, discussed in Section 2.
\end{proposition}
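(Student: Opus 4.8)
The plan is to reduce the sum over $\Reg_n$ to the coprime power sums $\Srelpr_r$ already evaluated in \eqref{S_relprime}, exploiting that $k$ is regular $\pmod n$ exactly when $\gcd(k,n)$ is a unitary divisor of $n$. Concretely, I would apply the scheme of Proposition \ref{prop_general} with $w(k,n)=k^r$ and $f=\1$, or equivalently group the $k\in\Reg_n$ according to the value $\gcd(k,n)=d\ddiv n$ and write $k=dj$ with $1\le j\le n/d$ and $\gcd(j,n/d)=1$. This gives
\begin{equation*}
\Sreg_r(n)=\sum_{d\ddiv n} d^r \sum_{\substack{j=1\\ \gcd(j,n/d)=1}}^{n/d} j^r = \sum_{d\ddiv n} d^r\, \Srelpr_r(n/d),
\end{equation*}
so the problem becomes purely a matter of inserting the closed form for $\Srelpr_r$ and resumming.

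Next I would substitute \eqref{S_relprime}. For each unitary divisor $d$ with $n/d\ge 2$ this yields $d^r\Srelpr_r(n/d)=\frac{n^r}{r+1}\sum_{m}\binom{r+1}{2m}B_{2m}\phi_{1-2m}(n/d)$, the factor $d^r(n/d)^r=n^r$ being independent of $d$. Summing over all such $d$ and interchanging the two summations, the inner sum becomes $\sum_{d\ddiv n}\phi_{1-2m}(n/d)=\sum_{e\ddiv n}\phi_{1-2m}(e)=\varrho_{1-2m}(n)$, by the very definition \eqref{varrho_s} together with the fact that $d\ddiv n$ iff $(n/d)\ddiv n$. This produces the main term $\frac{n^r}{r+1}\sum_m\binom{r+1}{2m}B_{2m}\varrho_{1-2m}(n)$; the displayed product form for $\varrho_{1-2m}(n)$ is then just the general expression $\varrho_{-s}(n)=n^{-s}\prod_{p^\nu\ddiv n}(p^{s\nu}-p^s+1)$ of Section 2 with $s=2m-1$.

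The one step requiring care --- and the real point of the argument --- is the unitary divisor $d=n$, for which $n/d=1$ and formula \eqref{S_relprime} does not apply (it was established only for $n\ge 2$). Here $\Srelpr_r(1)=1$, whereas the right-hand side of \eqref{S_relprime} evaluated at $1$ equals $\frac{1}{r+1}\sum_m\binom{r+1}{2m}B_{2m}$, which by \eqref{sum_r} at $n=1$ is exactly $1-\tfrac12=\tfrac12$. Thus the $d=n$ term contributes an extra $n^r(1-\tfrac12)=\tfrac{n^r}{2}$ beyond what the generic formula accounts for. Collecting the main term with this correction yields precisely \eqref{sum_reg_r}. I expect this bookkeeping at $n/d=1$ to be the only subtlety; everything else is the routine unitary-convolution resummation, and I would verify the edge case $n=1$ directly (both sides equal $1$) as a sanity check.
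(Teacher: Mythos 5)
Your proposal is correct and follows essentially the same route as the paper's proof: reduce via Proposition \ref{prop_general} to $\sum_{d\ddiv n} d^r \Srelpr_r(n/d)$, insert \eqref{S_relprime}, resum by \eqref{varrho_s}, and treat the $d=n$ (i.e.\ $n/d=1$) term separately, which produces the $\frac{n^r}{2}$. Your evaluation of $\frac{1}{r+1}\sum_{m}\binom{r+1}{2m}B_{2m}=\frac12$ via \eqref{sum_r} at $n=1$ is just a repackaging of the paper's use of the recurrence \eqref{Bernoulli_rec}, so the two arguments coincide in substance.
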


\begin{proof} Applying \eqref{R_repr} for $w(k,n)=k^r$ and $f=\1$ we
have
\begin{equation*}
\Sreg_r(n) = \sum_{d\ddiv n} \sum_{\substack{j=1\\
\gcd(j,n/d)=1}}^{n/d} (dj)^r = \sum_{d\ddiv n} d^r \Srelpr_r(n/d).
\end{equation*}

Now by \eqref{S_relprime} we deduce
\begin{equation*}
\Sreg_r(n) = n^r+ \sum_{\substack{d\ddiv n\\d<n}} d^r \left(
\frac{(n/d)^r}{r+1} \sum_{m=0}^{\lfloor r/2 \rfloor} \binom{r+1}{2m}
B_{2m} \phi_{1-2m}(n/d) \right)
\end{equation*}
\begin{equation*}
= n^r+ \frac{n^r}{r+1} \sum_{m=0}^{\lfloor r/2 \rfloor}
\binom{r+1}{2m} B_{2m} \sum_{\substack{d\ddiv n\\d<n}}
\phi_{1-2m}(n/d)
\end{equation*}
\begin{equation*}
= n^r+ \frac{n^r}{r+1} \sum_{m=0}^{\lfloor r/2 \rfloor}
\binom{r+1}{2m} B_{2m} \sum_{\substack{d\ddiv n\\d>1}}
\phi_{1-2m}(d)
\end{equation*}
\begin{equation*}
= n^r - \frac{n^r}{r+1} \sum_{m=0}^{\lfloor r/2 \rfloor}
\binom{r+1}{2m} B_{2m} + \frac{n^r}{r+1} \sum_{m=0}^{\lfloor r/2
\rfloor} \binom{r+1}{2m} B_{2m} \sum_{d\ddiv n} \phi_{1-2m}(d).
\end{equation*}

Here $\sum_{d\ddiv n} \phi_{1-2m}(n) = \varrho_{1-2m}(d)$ by
\eqref{varrho_s}. Also, by \eqref{Bernoulli_rec},
\begin{equation*}
\sum_{m=0}^{\lfloor r/2 \rfloor} \binom{r+1}{2m} B_{2m}
=\frac{r+1}{2}
\end{equation*}
and this completes the proof.
\end{proof}

For example, in the cases $r=1,2,3,4$ we deduce that for every $n\in
\N$,
\begin{equation} \label{sum_1}
\Sreg_1(n)= \frac{n(\varrho(n)+1)}{2},
\end{equation}
\begin{equation} \label{sum_2}
\Sreg_2(n) = \frac{n^2}{2} + \frac{n^2\varrho(n)}{3}+ \frac{n}{6}
\prod_{p^{\nu} \ddiv n} \left(p^{\nu}- p+1\right),
\end{equation}
\begin{equation*}
\Sreg_3(n) = \frac{n^3}{2} + \frac{n^3\varrho(n)}{4}+ \frac{n^2}{4}
\prod_{p^{\nu} \ddiv n} \left(p^{\nu}- p+1\right),
\end{equation*}
\begin{equation*}
\Sreg_4(n) = \frac{n^4}{2} + \frac{n^4\varrho(n)}{5}+ \frac{n^3}{3}
\prod_{p^{\nu} \ddiv n} \left(p^{\nu}- p+1\right) - \frac{n}{30}
\prod_{p^{\nu} \ddiv n} \left(p^{3\nu}- p^3+1\right).
\end{equation*}

The formula \eqref{sum_1} was obtained in \cite[Th.\ 3]{Tot2008} and
\cite[Sec.\ 2]{ApoPet2012}, while \eqref{sum_2} was given in a
different form in \cite[Prop.\ 1]{ApoPet2012}. Note that if $n$ is
squarefree, then \eqref{sum_reg_r} reduces to \eqref{sum_r}.

For a real number $s$ consider now the slightly more general sum
\begin{equation*}
\Sreg_s(n,x): = \sum_{\substack{k\le x \\ k \text{\rm \ regular (mod
$n$)}}} k^s.
\end{equation*}

\begin{proposition} \label{asympt_power}
Let $s\ge 0$ be a fixed real number. Then uniformly for real $x>1$
and $n\in \N$,
\begin{equation*}
\Sreg_s(n,x)= \frac{x^{s+1}}{s+1}\cdot \frac{\varrho(n)}{n} +
O\left(x^s 3^{\omega(n)} \right).
\end{equation*}
\end{proposition}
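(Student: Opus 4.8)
The plan is to reduce the sum over regular integers to a sum over unitary divisors, exactly as in the proof of Proposition \ref{prop_powersum}, and then apply an asymptotic estimate for the relatively prime power sum up to an arbitrary bound. First I would write $k$ regular (mod $n$) as the condition $\gcd(k,n)\ddiv n$, group the integers $k\le x$ according to $d=\gcd(k,n)$, and set $k=dj$. This gives
\begin{equation*}
\Sreg_s(n,x)=\sum_{d\ddiv n} \sum_{\substack{j\le x/d\\ \gcd(j,n/d)=1}} (dj)^s = \sum_{d\ddiv n} d^s \sum_{\substack{j\le x/d\\ \gcd(j,n/d)=1}} j^s.
\end{equation*}
The inner sum is the relatively prime power sum with modulus $n/d$ and upper bound $x/d$, and the key input will be the elementary estimate \eqref{asympt_relprime}, applied with $t=n/d$, namely $\sum_{j\le y,\ \gcd(j,n/d)=1} j^s = \frac{y^{s+1}}{s+1}\cdot \frac{\phi(n/d)}{n/d}+O\left(y^s 2^{\omega(n/d)}\right)$, valid uniformly for fixed $s>0$. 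For $s=0$ one uses instead the standard count $\#\{j\le y:\gcd(j,n/d)=1\}=y\,\phi(n/d)/(n/d)+O(2^{\omega(n/d)})$, so the same shape of estimate holds for all $s\ge 0$.

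Substituting $y=x/d$ into the inner estimate, I would obtain
\begin{equation*}
\Sreg_s(n,x)=\sum_{d\ddiv n} d^s\left(\frac{(x/d)^{s+1}}{s+1}\cdot\frac{\phi(n/d)}{n/d}+O\left((x/d)^s 2^{\omega(n/d)}\right)\right).
\end{equation*}
The main term is $\frac{x^{s+1}}{s+1}\sum_{d\ddiv n} \frac{1}{d}\cdot\frac{\phi(n/d)}{n/d}=\frac{x^{s+1}}{s+1}\cdot\frac{1}{n}\sum_{d\ddiv n}\phi(n/d)$, and by \eqref{varrho} this unitary-divisor sum of $\phi$ equals $\varrho(n)$, yielding precisely $\frac{x^{s+1}}{s+1}\cdot\frac{\varrho(n)}{n}$.

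For the error term I would bound $\sum_{d\ddiv n} d^s (x/d)^s 2^{\omega(n/d)} = x^s \sum_{d\ddiv n} 2^{\omega(n/d)}$, and since $\omega(n/d)\le \omega(n)$ the sum has at most $2^{\omega(n)}$ terms each of size at most $2^{\omega(n)}$, giving $\sum_{d\ddiv n} 2^{\omega(n/d)}\le 2^{\omega(n)}\cdot 2^{\omega(n)}=4^{\omega(n)}$; a cleaner route is to note that $d\mapsto 2^{\omega(d)}$ is multiplicative and its unitary-divisor sum over a prime power $p^\nu$ is $2^{\omega(1)}+2^{\omega(p^\nu)}=1+2=3$, so $\sum_{d\ddiv n} 2^{\omega(n/d)}=\prod_{p^\nu\ddiv n}(1+2)=3^{\omega(n)}$, which is exactly the factor appearing in the statement. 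The main obstacle is ensuring that all estimates are uniform in $n$, not merely valid for fixed $n$; this is precisely why \eqref{asympt_relprime} is quoted with the explicit $2^{\omega(t)}$ dependence, and the uniformity propagates cleanly through the finite unitary-divisor sum. The multiplicativity argument giving the clean $3^{\omega(n)}$ is the detail I would want to present carefully, as it both matches the claimed error and avoids the cruder $4^{\omega(n)}$ bound.
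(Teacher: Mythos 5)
Your proof is correct and takes essentially the same approach as the paper: the identical unitary-divisor decomposition $\Sreg_s(n,x)=\sum_{d\ddiv n} d^s \sum_{j\le x/d,\ \gcd(j,n/d)=1} j^s$, the estimate \eqref{asympt_relprime} applied with $t=n/d$, and \eqref{varrho} to identify the main term. You are in fact slightly more thorough than the paper's proof, which leaves the evaluation $\sum_{d\ddiv n}2^{\omega(n/d)}=3^{\omega(n)}$ implicit and silently applies \eqref{asympt_relprime} (stated for fixed $s>0$) at the endpoint $s=0$, both of which you address explicitly.
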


\begin{proof} Similar to the proof of Proposition \ref{prop_general},
\begin{equation*}
\Sreg_s(n,x)= \sum_{\substack{k\le x\\ \gcd(k,n)\ddiv
n}} k^s = \sum_{d\ddiv n} d^s \sum_{\substack{j\le x/d\\
\gcd(j,n/d)=1}} j^s.
\end{equation*}

Now using the estimate \eqref{asympt_relprime} we deduce
\begin{equation*}
\Sreg_s(n,x)=  \sum_{d\ddiv n} d^s \left(\frac{(x/d)^{s+1}
\phi(n/d)}{(s+1)(n/d)}+ O\left((x/d)^s 2^{\omega(n/d)} \right)
\right)
\end{equation*}
\begin{equation*}
= \frac{x^{s+1}}{(s+1)n} \sum_{d\ddiv n} \phi(n/d)+ O \left(x^s
\sum_{d\ddiv n} 2^{\omega(n/d)} \right),
\end{equation*}
and using \eqref{varrho} the proof is complete.
\end{proof}

%%%%%%%%%%%%%%%%%%%%%%%%%%%%%%%%%%%%%%%%%%%%%%%%%%%%%%%%%%%%%%%%%%%%%%%%%
%%%%%%%%%%%%%%%%%%%%%%%%%%%%%%%%%%%%%%%%%%%%%%%%%%%%%%%%%%%%%%%%%%%%%%%%%

\section{Identities for other sums and products over regular integers (mod $n$)}

\subsection{\bf Sums involving Bernoulli polynomials}

Let $B_m(x)$ ($m\in \N_0$) be the Bernoulli polynomials defined by
\begin{equation*}
\frac{te^{xt}}{e^t-1}= \sum_{m=0}^{\infty} B_m(x) \frac{t^m}{m!}.
\end{equation*}

Here $B_0(x)=1$, $B_1(x)=x-1/2$, $B_2(x)=x^2-x+1/6$, $B_3(x)=x^3-3x^2/2+ x/2$,
$B_m(0)=B_m$ ($m\in \N_0$) are the Bernoulli numbers already defined
in Section \ref{Sect_Power_sums} and one has the recurrence relation
\begin{equation*}
B_m(x) = \sum_{j=0}^m \binom{m}{j} B_j x^{m-j} \quad (m\in \N_0).
\end{equation*}

It is well known (see, e.g., \cite[Sect.\ 9.1]{Coh2007}) that for
every $n,m\in \N$, $m\ge 2$,
\begin{equation} \label{sum_Bernoulli}
T_m(n):= \sum_{k=1}^n B_m(k/n) = \frac{B_m}{n^{m-1}}.
\end{equation}

Furthermore, applying  \eqref{connect_G} one obtains from
\eqref{sum_Bernoulli} that for every $n,m\in \N$, $m\ge 2$,
\begin{equation} \label{sum_Bernoulli_relprime}
\Trelpr_m(n) := \sum_{\substack{k=1\\ \gcd(k,n)=1}}^n B_m(k/n) = B_m
\phi_{1-m}(n),
\end{equation}
where $\phi_{1-m}(n) =n^{1-m} \prod_{p\div n}
\left(1-p^{m-1}\right)$. See \cite[Sect.\ 9.9, Ex. 7]{Coh2007}. We
now show the validity of the next formula:\\[2 pt]

\noindent {\bf Proposition 5.1.1} \label{prop_Bernoulli_reg}
\textit{For every $n,m\in \N$, $m\ge 2$,
\begin{equation} \label{sum_Bernoulli_reg}
\Treg_m(n) := \sum_{k\in \Reg_n} B_m(k/n) = B_m \varrho_{1-m}(n),
\end{equation}
where $\varrho_{1-m}(n)=n^{1-m} \prod_{p^{\nu} \ddiv  n}
\left(p^{(m-1)\nu}-p^{m-1}+1 \right)$.}

\begin{proof} Choosing $g(x)=B_m(x)$ and $f=\1$ we deduce from \eqref{R_repr_g}
by using \eqref{sum_Bernoulli_relprime} that
\begin{equation*}
\Treg_m(n) = \sum_{d\ddiv n} \Trelpr_m(d)
\end{equation*}
\begin{equation*}
= B_m \sum_{d\ddiv n} \phi_{1-m}(d)= B_m \varrho_{1-m}(n),
\end{equation*}
according to \eqref{varrho_s}.
\end{proof}

\noindent{\bf Remark 5.1.2}
{\rm In the case $m=1$ a direct computation and \eqref{sum_1}
show that $\Treg_1(n)=1/2$. Also, \eqref{sum_Bernoulli_reg} can be
put in the form
\begin{equation*}
\sum_{\substack{k=0\\ k \text{\rm \ regular (mod $n$)}}}^{n-1}
B_m(k/n) = B_m \varrho_{1-m}(n),
\end{equation*}
which holds true for every $n,m\in \N$, also for $m=1$. }

%%%%%%%%%%%%%%%%%%%%%%%%%%%%%%%%%%%%%%%%%%%%%%%%%%%%%%%%%%%%%%%%%%%%%%%%%

\subsection{\bf Sums involving gcd's and the $\exp$ function}

Consider in what follows the function
\begin{equation*}
\Preg_{f,t}(n) := \sum_{k\in \Reg_n} f(\gcd(k,n)) \exp(2\pi i kt/n)
\quad (n\in \N, t\in \Z),
\end{equation*}
where $f$ is an arbitrary arithmetic function. For $t=0$ and
$f(n)=n$ ($n\in \N$) we reobtain the function $\widetilde{P}(n)$ and
for $f=\1$ we have $\overline{c}_n(t)$, the analogue of the
Ramanujan sums, both given in the Introduction. We have\\[2pt]

\noindent {\bf Proposition 5.2.1}
\textit{For every $f$ and every $n\in \N$ and $t\in \Z$,}
\begin{equation*}
\Preg_{f,t}(n) = \sum_{d \ddiv n} f(d) c_{n/d}(t).
\end{equation*}

\textit{If $f$ is integer valued and multiplicative (in particular, if
$f=\id$), then $n\mapsto \Preg_{f,t}(n)$ also has these properties.}

\begin{proof} Choosing $g(x)=\exp(2\pi itx)$ from \eqref{R_repr_g} we
deduce at once that
\begin{equation*}
\Preg_{f,t}(n)= \sum_{d\ddiv n} f(d) \sum_{\substack{j=1\\
\gcd(j,n/d)=1}}^{n/d} \exp(2\pi ijt/(n/d)) =\sum_{d \ddiv n} f(d)
c_{n/d}(t).
\end{equation*}
\end{proof}

For $t=1$ and $f=\id$ this gives the multiplicative function
\begin{equation*}
\Preg_{\id,1}(n) = \sum_{d\ddiv n} d\mu(n/d),
\end{equation*}
not investigated in the literature, as far as we know. Here
$\Preg_{\id,1}(p^{\nu})= p-1$ for every prime $p$ and
$\Preg_{\id,1}(p^{\nu})= p^{\nu}$ for every prime power $p^{\nu}$
with $\nu \ge 2$.\\[2 pt]

\noindent {\bf Proposition 5.2.2}\label{asymp_DFT}
\textit{We have}
\begin{equation*}
\sum_{n\le x} \Preg_{\id,1}(n)= \frac{x^2}{2}\prod_p
\left(1-\frac1{p^2}+\frac1{p^3} \right) + O(x\log^2 x).
\end{equation*}

\begin{proof} Using \eqref{asympt_relprime} for $s=1$ we deduce
\begin{equation*}
\sum_{n\le x} \Preg_{\id,1}(n)= \sum_{d\le x} \mu(d)
\sum_{\substack{\delta \le x/d\\ \gcd(\delta,d)=1}} \delta
\end{equation*}
\begin{equation*}
= \sum_{d\le x} \mu(d) \left(\frac{\phi(d)(x/d)^2}{2d} +
O((x/d)2^{\omega(d)}) \right)
\end{equation*}
\begin{equation*}
= \frac{x^2}{2} \sum_{d=1}^{\infty} \frac{\mu(d)\phi(d)}{d^3}+
O\left(x^2 \sum_{d>x} \frac1{d^2}\right)+  O\left(x\sum_{d\le x}
\frac{2^{\omega(d)}}{d}\right),
\end{equation*}
giving the result.
\end{proof}

%%%%%%%%%%%%%%%%%%%%%%%%%%%%%%%%%%%%%%%%%%%%%%%%%%%%%%%%%%%%%%%%%%%%%%%%%

\subsection{\bf An analogue of Menon's identity}

Our next result is the analogue of Menon's identity (\cite{Men1965},
see also \cite{Tot2011})
\begin{equation} \label{Menon_id}
\sum_{\substack{k=1\\ \gcd(k,n)=1}}^n \gcd(k-1,n) = \phi(n)\tau(n)
\quad (n\in \N).
\end{equation}

\noindent {\bf Proposition 5.3.1} \label{prop_analog_Menon}
\textit{For every $n\in \N$,}
\begin{equation*}
\sum_{k \in \Reg_n} \gcd(k-1,n) = \sum_{d\ddiv n} \phi(d)\tau(d)=
\prod_{p^\nu \ddiv n} \left( p^{\nu-1} (p-1)(\nu+1)+1 \right).
\end{equation*}

\begin{proof} Applying \eqref{R_repr} for $w(k,n)=\gcd(k-1,n)$ and $f=\1$ we
deduce
\begin{equation*}
S_n:= \sum_{k\in \Reg_n} \gcd(k-1,n) =\sum_{d\ddiv n} \sum_{\substack{j=1\\
\gcd(j,n/d)=1}}^{n/d} \gcd(dj-1,n)
\end{equation*}
\begin{equation*}
= \sum_{d\ddiv n} \sum_{\substack{j=1\\
\gcd(j,n/d)=1}}^{n/d} \gcd(dj-1,n/d),
\end{equation*}
since $\gcd(dj-1,d)=1$ for every $d$ and $j$. Now we use the
identity
\begin{equation*}
\sum_{\substack{k=1\\ \gcd(k,n)=1}}^n \gcd(ak-1,n) = \phi(n)\tau(n)
\quad (n\in \N),
\end{equation*}
valid for every fixed $a\in \N$ with $\gcd(a,n)=1$, see \cite[Cor.\
14]{Tot2011} (for $a=1$ this reduces to \eqref{Menon_id}). Choose
$a=d$. Since $d\ddiv n$ we have $\gcd(d,n/d)=1$ and obtain
\begin{equation*}
S_n= \sum_{d\ddiv n} \phi(n/d)\tau(n/d) = \sum_{d\ddiv n}
\phi(d)\tau(d).
\end{equation*}
\end{proof}

%%%%%%%%%%%%%%%%%%%%%%%%%%%%%%%%%%%%%%%%%%%%%%%%%%%%%%%%%%%%%%%%%%%%%%%%%

\subsection{\bf Trigonometric sums}

Further identities for sums over $\Reg_n$ can be derived. As
examples, consider the following known trigonometric identities. For
every $n\in \N$, $n\ge 2$,
\begin{equation*}
\sum_{k=1}^n \cos^2 \left(\frac{k\pi}{n}\right) = \frac{n}{2};
\end{equation*}
furthermore, for every $n\in \N$ odd number,
\begin{equation*}
\sum_{k=1}^n \tan^2 \left(\frac{k\pi}{n}\right) = n^2-n;
\end{equation*}
and also for every $n\in \N$ odd,
\begin{equation*}
\sum_{k=1}^n \tan^4 \left(\frac{k\pi}{n}\right) =
\frac1{3}(n^4-4n^2+3n).
\end{equation*}

See, for example, \cite{BecHal2010} for a discussion and proofs of
these identities. See \cite[Appendix 3]{NZM} for other similar identities.
By the approach given in Remark
\ref{Remark_relprime} we deduce that for every $n\in \N$,
\begin{equation*}
\sum_{\substack{k=1\\ \gcd(k,n)=1}}^n \cos^2
\left(\frac{k\pi}{n}\right) = \frac{\phi(n)+\mu(n)}{2};
\end{equation*}
for every $n\in \N$ odd number,
\begin{equation*}
\sum_{\substack{k=1\\ \gcd(k,n)=1}}^n \tan^2
\left(\frac{k\pi}{n}\right) = \phi_2(n)-\phi(n);
\end{equation*}
and for every $n\in \N$ odd,
\begin{equation*}
\sum_{\substack{k=1\\ \gcd(k,n)=1}}^n  \tan^4
\left(\frac{k\pi}{n}\right) = \frac1{3}
(\phi_4(n)-4\phi_2(n)+3\phi(n)).
\end{equation*}

This gives the next results. The proof is similar to the proofs
given above.\\[2 pt]

\noindent {\bf Proposition 5.4.1}
\textit{For every $n\in \N$,}
\begin{equation*}
\sum_{k\in \Reg_n} \cos^2 \left(\frac{k\pi}{n}\right) =
\frac{\varrho(n)+\overline{\mu}(n)}{2},
\end{equation*}
\textit{where $\overline{\mu}(n)=\sum_{d\ddiv  n} \mu(d)$ is the
characteristic function of the squarefull integers $n$, given in the
Introduction.}\\[2 pt]

\noindent {\bf Proposition 5.4.2}
\textit{For every $n\in \N$ odd number,}
\begin{equation*}
\sum_{k\in \Reg_n} \tan^2 \left(\frac{k\pi}{n}\right) = \varrho_2(n)
-\varrho(n),
\end{equation*}
\begin{equation*}
\sum_{k\in \Reg_n} \tan^4 \left(\frac{k\pi}{n}\right) = \frac1{3}
(\varrho_4(n)-4\varrho_2(n)+3\varrho(n)).
\end{equation*}

%%%%%%%%%%%%%%%%%%%%%%%%%%%%%%%%%%%%%%%%%%%%%%%%%%%%%%%%%%%%%%%%%%%%%%%%%

\subsection{\bf The product of numbers in $\Reg_n$}

It is known (see, e.g., \cite[p.\ 197, Ex.\ 24]{NZM}) that for every
$n\in \N$,
\begin{equation} \label{prod_rel_prime}
\Qrelpr(n):= \prod_{\substack{k=1\\
\gcd(k,n)=1}}^n k = n^{\phi(n)} A(n),
\end{equation}
where
\begin{equation*}
A(n) = \prod_{d\div n} (d!/d^d)^{\mu(n/d)}.
\end{equation*}

We show that\\[2 pt]

\noindent {\bf Proposition 5.5.1} \label{prop_product_reg_int}
\textit{For every $n\in \N$,}
\begin{equation*}
\Qreg(n):= \prod_{k \in \Reg_n} k = n^{\varrho(n)} \prod_{d\ddiv n}
A(d).
\end{equation*}

\begin{proof} Choosing $w(k,n)=\log k$ and $f=\1$ in Proposition \ref{prop_general} we have
\begin{equation*}
\log \Qreg(n) = \sum_{k\in \Reg_n} \log k = \sum_{d\ddiv n} \sum_{\substack{j=1\\
\gcd(j,n/d)=1}}^{n/d} \log(dj)
\end{equation*}
\begin{equation*}
= \sum_{d\ddiv n} \left(\phi(n/d)\log d + \log \Qrelpr(n/d) \right)
\end{equation*}
\begin{equation*}
= \sum_{d\ddiv n} \left(\phi(d)\log (n/d)  + \log \Qrelpr(d) \right)
\end{equation*}
\begin{equation*}
= (\log n) \sum_{d\ddiv n} \phi(d) - \sum_{d\ddiv n} \phi(d) \log d
+ \sum_{d\ddiv n} \log \Qrelpr(d).
\end{equation*}

Hence,
\begin{equation*}
\Qreg(n) = n^{\varrho(n)} \prod_{d\ddiv n}
\frac{\Qrelpr(d)}{d^{\phi(d)}}.
\end{equation*}

Now the result follows from the identity \eqref{prod_rel_prime}.
\end{proof}

%%%%%%%%%%%%%%%%%%%%%%%%%%%%%%%%%%%%%%%%%%%%%%%%%%%%%%%%%%%%%%%%%%%%%%%%%

\subsection{\bf Products involving the Gamma function}

Let $\Gamma$ be the Gamma function defined for $x>0$ by
\begin{equation*}
\Gamma(x)= \int_0^{\infty} e^{-t}t^{x-1}\, dt.
\end{equation*}

It is well known that for every $n\in \N$,
\begin{equation} \label{prod_Gamma}
R(n): = \prod_{k=1}^n \Gamma(k/n) =
\frac{(2\pi)^{(n-1)/2}}{\sqrt{n}},
\end{equation}
which is a consequence of Gauss' multiplication formula.
For the $q$-analogs of the Gamma and Beta functions and the multiplication formula see the recent papers
\cite{Ege, EgeYld} published in this journal.
Furthermore, for every $n\in \N$, $n\ge 2$,
\begin{equation} \label{prod_Gamma_rel_prime}
\Rrelpr(n):= \prod_{\substack{k=1\\ \gcd(k,n)=1}}^n \Gamma(k/n) =
\frac{(2\pi)^{\phi(n)/2}}{\exp (\Lambda(n)/2)},
\end{equation}
see \cite{SanTot1989,Mar2009}.\\[2 pt]

\noindent {\bf Proposition 5.6.1}\label{prop_Gamma_reg}
\textit{For every $n\in \N$,}
\begin{equation} \label{prod_reg_Gamma}
\Rreg(n):= \prod_{k\in \Reg_n} \Gamma(k/n) =
\frac{(2\pi)^{(\varrho(n)-1)/2}}{\sqrt{\kappa(n)}}.
\end{equation}

\begin{proof} Choosing $g=\log \Gamma$ and $f=\1$ in \eqref{R_repr_g} and
using \eqref{prod_Gamma_rel_prime} we deduce
\begin{equation*}
\log \Rreg(n) = \sum_{k\in \Reg_n} \log \Gamma(k/n) = \sum_{d\ddiv
n} \log \Rrelpr(d)
\end{equation*}
\begin{equation*}
=\sum_{\substack{d\ddiv n\\ d>1}} \left(\frac{\log 2\pi}{2} \phi(d)-
\frac1{2} \Lambda(d)\right)
\end{equation*}
\begin{equation*}
=\sum_{d\ddiv n} \left(\frac{\log 2\pi}{2} \phi(d)- \frac1{2}
\Lambda(d)\right)  - \frac{\log 2\pi}{2} = \frac{\log 2\pi}{2}
(\varrho(n)-1) - \frac1{2} \sum_{d\ddiv n} \Lambda(d),
\end{equation*}
where the last sum is $\log \kappa(n)$.
\end{proof}

For squarefree $n$ \eqref{prod_reg_Gamma} reduces to
\eqref{prod_Gamma}.

%%%%%%%%%%%%%%%%%%%%%%%%%%%%%%%%%%%%%%%%%%%%%%%%%%%%%%%%%%%%%%%%%%%%%%%%%

\subsection{\bf Identities involving cyclotomic polynomials}

Let $\Phi_n(x)$ ($n\in \N$) stand for the cyclotomic polynomials (see, e.g., \cite[Ch.\ 13]{IreRos1990})
defined by
\begin{equation*}
\Phi_n(x)= \prod_{\substack{k=1\\ \gcd(k,n)=1}}^n \left(x-\exp(2\pi
ik/n) \right).
\end{equation*}

Consider now the following analogue of the cyclotomic polynomials
$\Phi_n(x)$:
\begin{equation*}
\Phireg_n(x)= \prod_{k\in \Reg_n} \left(x-\exp(2\pi ik/n) \right).
\end{equation*}

The application of Proposition \ref{prop_general} gives the
following result.\\[2 pt]

\noindent {\bf Proposition 5.7.1}
\textit{For every $n\in \N$,}
\begin{equation*}
\Phireg_n(x)=\prod_{d\ddiv n} \Phi_d(x).
\end{equation*}

Here the degree of $\Phireg_n(x)$ is $\varrho(n)$. If $n$ is
squarefree, then $\Phireg_n(x)= x^n-1$ and for example,
$\Phireg_{12}(x)= \Phi_1(x)\Phi_3(x)\Phi_4(x)\Phi_{12}(x)
=x^9-x^6+x^3-1$.

It is well known that for every $n\in \N$, $n\ge 2$,
\begin{equation} \label{formula_U}
U(n):=\prod_{\substack{k=1\\ \gcd(k,n)=1}}^n \sin
\left(\frac{k\pi}{n}\right) =\frac{\Phi_n(1)}{2^{\phi(n)}},
\end{equation}
where
\begin{equation*}
\Phi_n(1) = \begin{cases} p, & n=p^{\, \nu}, \nu \ge 1, \\
1, & \text{otherwise, i.e., if $\omega(n)\ge 2$},
\end{cases}
\end{equation*}
and for $n\ge 3$,
\begin{equation} \label{formula_V}
V(n):=\prod_{\substack{k=1\\ \gcd(k,n)=1}}^n \cos
\left(\frac{k\pi}{n}\right) =\frac{\Phi_n(-1)}{(-4)^{\phi(n)/2}},
\end{equation}
where
\begin{equation*}
\Phi_n(-1)= \begin{cases} 2, & n=2^{\, \nu},  \\
p, & n=2p^{\, \nu}, p>2 \text{ prime},  \nu \ge 1,\\ 1, &
\text{otherwise}.
\end{cases}
\end{equation*}

For every $n\in \N$, $\prod_{k\in \Reg_n} \sin(k\pi/n)=0$, since
$n\in \Reg_n$. This suggests to consider also the modified products
\begin{equation*}
\Uregmod(n):= \prod_{\substack{k=1\\ k \text{\rm \ regular (mod
$n$)}}}^{n-1} \sin \left(\frac{k\pi}{n}\right),
\end{equation*}
\begin{equation*}
\Vregmod(n):= \prod_{\substack{k=1\\ k \text{\rm \ regular (mod
$n$)}}}^{n-1} \cos \left(\frac{k\pi}{n}\right).
\end{equation*}

We show that $\Uregmod(n)$ is nonzero for every $n\ge 2$. More
precisely, define the modified polynomials
\begin{equation*}
\Phiregmod_n(x)= (x-1)^{-1} \Phireg_n(x) = \prod_{\substack{d\ddiv
n\\d>1}} \Phi_d(x).
\end{equation*}

Here, for example, $\Phiregmod_{12}(x)=\Phi_3(x)\Phi_4(x)
\Phi_{12}(x) =x^8+x^7+x^6+x^2+x+1$. All of the polynomials
$\Phiregmod_n(x)$ have symmetric coefficients. By arguments similar
to those leading to the formulas \eqref{formula_U} and
\eqref{formula_V} we obtain the following identities.\\[2 pt]

\noindent {\bf Proposition 5.7.2} \label{prop_sin_cos}
\textit{For every $n\in \N$, $n\ge 2$,}
\begin{equation*}
\Uregmod(n)= \frac{\Phiregmod_n(1)}{2^{\varrho(n)-1}}
=\frac{\kappa(n)}{2^{\varrho(n)-1}},
\end{equation*}
\textit{and for every $n\in \N$, $n\ge 3$ odd,}
\begin{equation*}
\Vregmod(n)= \frac{\Phiregmod_n(-1)}{(-4)^{(\varrho(n)-1)/2}} =
\left(-1/4 \right)^{(\varrho(n)-1)/2}.
\end{equation*}

Note that $\varrho(n)$ is odd for every $n\in \N$ odd.

\section{Maximal orders of certain functions}

Let $\sigma(n)$ be the sum of divisors of $n$ and let
$\psi(n)=n\prod_{p\mid n} (1+1/p)$ be the Dedekind function. The
following open problems were formulated in \cite{Apo}:  What are the
maximal orders of the functions $\varrho(n)\sigma(n)$ and
$\varrho(n)\psi(n)$?

The answer is the following:

\begin{proposition}
\begin{equation*}
\limsup_{n\to \infty} \frac{\varrho(n)\sigma(n)}{n^2\log \log n}=
\limsup_{n\to \infty} \frac{\varrho(n)\psi(n)}{n^2\log \log n}=
\frac{6}{\pi^2} e^{\gamma},
\end{equation*}
where $\gamma$ is the Euler-Mascheroni constant.
\end{proposition}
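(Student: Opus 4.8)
The plan is to reduce both quantities to the single function $F(n):=\prod_{p\div n}(1+1/p)$ and then determine its maximal order. First I would use the multiplicativity of $\varrho$, $\sigma$ and $\psi$ to write
\[
\frac{\varrho(n)\sigma(n)}{n^2}=\prod_{p^{\nu}\ddiv n}\left(1-\tfrac1p+\tfrac1{p^{\nu}}\right)\left(1+\tfrac1p+\cdots+\tfrac1{p^{\nu}}\right),\qquad \frac{\varrho(n)\psi(n)}{n^2}=\prod_{p^{\nu}\ddiv n}\left(1-\tfrac1p+\tfrac1{p^{\nu}}\right)\left(1+\tfrac1p\right).
\]
The key elementary step is to show that in both products each local factor is at most $1+1/p$, with equality exactly when $\nu=1$. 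For the $\psi$-product this is immediate, since $1-1/p+1/p^{\nu}\le 1$ for $\nu\ge 1$ (equality iff $\nu=1$). For the $\sigma$-product a short computation rewrites the local factor as $1+\dfrac{p^{\nu-1}(p^2-p+1)-1}{p^{2\nu}(p-1)}$, and the bound $\le 1+1/p$ reduces to the polynomial inequality $p^{2\nu+1}-p^{2\nu}\ge p^{\nu+2}-p^{\nu+1}+p^{\nu}-p$, valid for all $p\ge 2$ and $\nu\ge 1$ (with equality iff $\nu=1$). Consequently $\varrho(n)\sigma(n)/n^2\le F(n)$ and $\varrho(n)\psi(n)/n^2\le F(n)$ for every $n$, and both inequalities become equalities when $n$ is squarefree.

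Next I would establish that $\ds\limsup_{n\to\infty}F(n)/\log\log n=\tfrac{6}{\pi^2}e^{\gamma}$. Since $F(n)=F(\kappa(n))$ while $\log\log n\ge\log\log\kappa(n)$, it suffices to take the limsup over squarefree $n$. For a squarefree $m$ with prime factors $q_1<\cdots<q_k$, let $p_1<p_2<\cdots$ denote the primes and $m^{\ast}=p_1\cdots p_k$ the $k$-th primorial. Because $p_i\le q_i$ for every $i$, one has both $F(m)\le F(m^{\ast})$ and $m^{\ast}\le m$, hence $\dfrac{F(m)}{\log\log m}\le\dfrac{F(m^{\ast})}{\log\log m^{\ast}}$; thus the limsup is attained along primorials. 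For $m^{\ast}=\prod_{p\le y}p$ Mertens' theorems give $F(m^{\ast})=\Big(\prod_{p\le y}(1-1/p^2)\Big)\prod_{p\le y}(1-1/p)^{-1}\sim\tfrac{6}{\pi^2}e^{\gamma}\log y$, while $\log m^{\ast}=\theta(y)\sim y$ yields $\log\log m^{\ast}\sim\log y$, and division produces the constant.

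Combining the two steps finishes the argument: the upper bounds $\varrho(n)\sigma(n)/n^2,\ \varrho(n)\psi(n)/n^2\le F(n)$ give $\limsup\le\tfrac{6}{\pi^2}e^{\gamma}$ for both, while evaluating along the squarefree primorials, where both ratios equal $F(n)$, supplies the matching lower bound. I expect the main obstacle to be the upper-bound half of the maximal order of $F$: the crude estimate $F(n)\le n/\phi(n)$ only yields the weaker constant $e^{\gamma}$, so the gain of the factor $6/\pi^2$ must be extracted from the convergent product $\prod_p(1-1/p^2)=\zeta(2)^{-1}$, which is precisely what the rearrangement onto primorials ($p_i\le q_i$) is designed to capture. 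The local-factor inequality for the $\sigma$-product is the other delicate point, but it is a purely finite polynomial check.
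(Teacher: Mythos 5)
Your proposal is correct, but it takes a genuinely different route from the paper. The paper simply invokes a general theorem of T\'oth and Wirsing \cite[Cor.\ 1]{TotWir2003} on maximal orders of multiplicative functions: after checking that $f(n)=\varrho(n)\sigma(n)/n^2$ satisfies $f(p)=1+1/p$ and $f(p^{\nu})<1+1/p$ for $\nu\ge 2$ (their expansion $f(p^{\nu})=1+1/p^{\nu}+1/p^{\nu+2}+\cdots+1/p^{2\nu}$ is equivalent to your rewriting $1+\frac{p^{\nu-1}(p^2-p+1)-1}{p^{2\nu}(p-1)}$, so $\rho(p)=\sup_{\nu}f(p^{\nu})=1+1/p$), the cited theorem immediately yields $\limsup f(n)/\log\log n=e^{\gamma}\prod_p(1-1/p)(1+1/p)=\frac{6}{\pi^2}e^{\gamma}$, with the $\psi$ case handled by the domination $g(p^{\nu})\le f(p^{\nu})$. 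You instead give a self-contained elementary proof: the same local inequality shows both ratios are dominated by $F(n)=\prod_{p\div n}(1+1/p)$ with equality on squarefree $n$, and you then prove the maximal order of $F$ directly by passing to the radical, rearranging prime factors onto primorials (using $p_i\le q_i$ to get both $F(m)\le F(m^{\ast})$ and $m^{\ast}\le m$), and applying Mertens' third theorem together with $\prod_p(1-1/p^2)=\zeta(2)^{-1}$ — in effect reproving the special case of the T\'oth--Wirsing result that is needed here, in the style of Landau's classical $\limsup n/(\phi(n)\log\log n)=e^{\gamma}$. The one point to tidy is the comparison step when $\omega(m)$ is small: for $m$ a large prime, $m^{\ast}=2$ and $\log\log m^{\ast}<0$, so the inequality $F(m)/\log\log m\le F(m^{\ast})/\log\log m^{\ast}$ is not literally valid; this is harmless because any sequence approaching the limsup must have $F(m)\ge \log\log m\to\infty$, hence $\omega(m)\to\infty$ and $m^{\ast}\to\infty$, but it should be said. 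What your approach buys is independence from the external reference (and an explicit construction of the extremal sequence); what the paper's approach buys is brevity and a template that instantly covers variants such as the remark on $\varrho_s(n)\sigma_s(n)/n^{2s}$.
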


\begin{proof} Apply the following general result, see \cite[Cor.\ 1]{TotWir2003}:
If $f$ is a nonnegative real-valued multiplicative
arithmetic function such that for each prime $p$,

i) $\rho(p):=\sup_{\nu \ge 0} f(p^{\nu})\le (1-1/p)^{-1}$, and

ii) there is an exponent $e_p=p^{o(1)}\in \N$ satisfying
$f(p^{e_p})\ge 1+1/p$,

then
\begin{equation*}
 \limsup_{n\to \infty} \frac{f(n)}{\log \log
n}=e^{\gamma}\prod_p \left(1-\frac1{p}\right) \rho(p).
\end{equation*}

Take $f(n)=\varrho(n)\sigma(n)/n^2$. Here $f(p)=1+1/p$ and 
$f(p^{\nu})= 1+1/p^{\nu}+1/p^{\nu+2}+1/p^{\nu+3}+ \ldots+1/p^{2\nu} < 1+1/p$ for
every prime $p$ and every $\nu \ge 2$.
This shows that $\rho(p)=1+1/p$ and obtain that
\begin{equation*}
\limsup_{n\to \infty} \frac{f(n)}{\log \log n}= e^{\gamma}\prod_p
\left(1-\frac1{p^2}\right) = \frac{6}{\pi^2} e^{\gamma}.
\end{equation*}

The proof is similar for the function $g(n)=\varrho(n)\psi(n)/n^2$.
In fact, $g(p)=f(p)=1+1/p$ and $g(p^{\nu})\le f(p^{\nu})$ for every
prime $p$ and every $\nu \ge 2$, therefore the result for $g(n)$
follows from the previous one.
\end{proof}

\begin{remark} {\rm Let $\sigma_s(n)=\sum_{d\div n} d^s$. Then for every real $s>1$,
\begin{equation*}
\limsup_{n\to \infty} \frac{\varrho_s(n)\sigma_s(n)}{n^{2s}} = \frac{\zeta(s)}{\zeta(2s)}.
\end{equation*}

This follows by observing that for $f_s(n)=\varrho_s(n)\sigma_s(n)/n^{2s}$,
$f_s(p)=1+1/p^s$ and $f_s(p^{\nu})= 1+1/p^{s\nu}+1/p^{s(\nu+2)}+
1/p^{s(\nu+3)}+\ldots+1/p^{2s\nu} < 1+1/p^s$ for every prime $p$ and
every $\nu \ge 2$. Hence, for every $n\in \N$,
\begin{equation*}
f_s(n) \le \prod_{p\div n} \left(1+\frac1{p^s}\right)< \prod_{p}
\left(1+\frac1{p^s}\right) = \frac{\zeta(s)}{\zeta(2s)},
\end{equation*}
and the $\limsup$ is attained for $n=n_k=\prod_{1\le j\le k} p_j$
with $k\to \infty$, where $p_j$ is the $j$-th prime.
}
\end{remark}

\section{Acknowledgement}
L. T\'oth gratefully acknowledges support from the Austrian Science
Fund (FWF) under the project Nr. M1376-N18.

%%%%%%%%%%%%%%%%%%%%%%%%%%%%%%%%%%%%%%%%%%%%%%%%%%%%%%%

\medskip

\noindent Br\u{a}du\c{t} Apostol \\ Pedagogic High School "Spiru
Haret" \\ Str. Timotei Cipariu, RO-620004 Foc\c{s}ani, Romania \\
E-mail: apo\_brad@yahoo.com

\medskip

\noindent L\'aszl\'o T\'oth \\
Institute of Mathematics, Universit\"at f\"ur Bodenkultur \\
Gregor Mendel-Stra{\ss}e 33, A-1180 Vienna, Austria \\ and \\
Department of Mathematics, University of P\'ecs \\ Ifj\'us\'ag u. 6,
H-7624 P\'ecs, Hungary \\ E-mail: ltoth@gamma.ttk.pte.hu

\end{document}